\newcommand{\vertiii}[1]{{\left\vert\kern-0.25ex\left\vert\kern-0.25ex\left\vert #1\right\vert\kern-0.25ex\right\vert\kern-0.25ex\right\vert}}
\begin{document}
\title{On pressure estimates for the Navier-Stokes equations}

\author{J. A. Fiordilino\thanks{Supported by the DoD SMART Scholarship.  Partially supported by NSF grants CBET 1609120 and DMS 1522267.}}
\maketitle
\begin{abstract}
	This paper presents a simple, general technique to prove finite element method (FEM) pressure stability and convergence.  Typically, pressure estimates are ignored in the literature.  However, full reliability of a numerical method is not established unless the solution pair $(u,p)$ is treated.  The simplicity of the proposed technique puts pressure estimates within reach of many existing and future numerical methods and lends itself to the numerical analyst's toolbox.
\end{abstract}
\section{Introduction}
In this paper, we illustrate a simple, general technique for proving stability and convergence of FEM pressure approximations for the incompressible Navier-Stokes equations (NSE).  Pressure estimates are achievable but typically ignored due to the difficulty in obtaining them; see, e.g., Heywood and Rannacher \cite{Heywood}, Ingram \cite{Ingram}, and Labovschii \cite{Labovschii}.  However, a complete numerical analysis establishes full reliability of a numerical method.  Therefore, a complete treatment should be given to the velocity-pressure solution pair $(u,p)$ whenever possible.  Consequently, scientists and engineers will be more confident in an algorithm's capabilities and limitations.

The proposed method utilizes the discrete inf-sup condition, (\ref{infsup}), and equivalence of the dual norms $\| \cdot \|_{X_{h}^\ast}$ and $\| \cdot \|_{V_{h}^\ast}$, Lemma \ref{glemma} in Section \ref{sectfe}.  Regarding the latter, this equivalence was established by Galvin in \cite{Galvin}.  Galvin used this result to prove pressure stability in $L^{2}(0,t^{\ast};L^{2}(\Omega))$ by bounding the discrete time derivative of the velocity approximation.  Later, Zhang, Hou, and Zhao \cite{Zhang} followed the stability proof to produce an error estimate for the pressure in $L^{1}(0,t^{\ast};L^{2}(\Omega))$.  However, as is common in intricate, technical analyses, their result was correct but a gap was present in their analysis.

Herein, we illustrate the technique for linearly implicit Backward Euler (LIBE), (\ref{libe}).  It will be easy to see that our analysis can be extended to many timestepping methods including linearly implicit BDF-k and Crank-Nicolson variants.  Alternatively, for penalty and artificial compressibility methods, where the velocity approximation is not weakly divergence free, weaker forms of stability and convergence can be proven in the fully discrete setting \cite{Fiordilino2}; see, e.g., Shen \cite{Shen} for semi-discrete analyses.

Recall the NSE.  Let $\Omega \subset \mathbb{R}^{d}$ (d = 2,3) be a convex polyhedral domain with piecewise smooth boundary $\partial \Omega$.  Given the fluid viscosity $\nu$, $u(x,0) = u^{0}(x)$, and the body force $f(x,t)$, the velocity $u(x,t):\Omega \times (0,t^{\ast}] \rightarrow \mathbb{R}^{d}$ and pressure $p(x,t):\Omega \times (0,t^{\ast}] \rightarrow \mathbb{R}$ satisfy
\begin{align}
u_{t} + u \cdot \nabla u - \nu \Delta u + \nabla p = f \; \; in \; \Omega, \notag
\\ \nabla \cdot u = 0 \; \; in \; \Omega, \label{s1}
\\ u = 0 \; \; on \; \partial \Omega. \notag
\end{align}

In Section \ref{sectmath}, we collect necessary mathematical tools.  In Section \ref{sectnum}, we present the fully discrete numerical scheme, (\ref{libe}), that will be analyzed.  A complete stability and error analysis are presented in Sections \ref{sectstab} and \ref{secterror}.  In particular, the velocity and pressure approximations are proven to be unconditionally, nonlinearly, energy stable in Theorem \ref{t1} and Corollary \ref{c1}.  First-order, optimal convergence is proven in Theorem \ref{t3} and Corollary \ref{c3}.  Lastly, we state conclusions in Section \ref{sectconc}.
\section{Mathematical Preliminaries}\label{sectmath}
The $L^{2} (\Omega)$ inner product is $(\cdot , \cdot)$ and the induced norm is $\| \cdot \|$.  The $L^{\infty}(\Omega)$ norm is denoted $\| \cdot \|_{\infty}$.  Further, $H^{s}(\Omega)$ denotes the Hilbert spaces of $L^{2}(\Omega)$ functions with distributional derivatives of order $s\geq 0$ in  $L^{2}(\Omega)$.  The corresponding norms and seminorms are $\| \cdot \|_{s}$ and $\vert \cdot \vert_{s}$; note that $\| \cdot \|_{0} = \vert \cdot \vert_{0} = \| \cdot \|$.  Define the Hilbert spaces,
\begin{align*}
X &:= H^{1}_{0}(\Omega)^{d} = \{ v \in H^{1}(\Omega)^d : v = 0 \; on \; \partial \Omega \}, \;
Q := L^{2}_{0}(\Omega) = \{ q \in L^{2}(\Omega) : (1,q) = 0 \}, \\
V &:= \{ v \in X : (q,\nabla \cdot v) = 0 \; \forall \; q \in Q \}.
\end{align*}
For functions $v \in X$, the Poincar\'{e}-Friedrichs inequality holds,
\begin{align*}
\|v\| \leq C_{p} \|\nabla v\|.
\end{align*}
The explicitly skew-symmetric trilinear form is denoted:
\begin{align*}
b(u,v,w) &= \frac{1}{2} (u \cdot \nabla v, w) - \frac{1}{2} (u \cdot \nabla w, v) \; \; \; \forall u,v,w \in X.
\end{align*}
\noindent It enjoys the following properties.
\begin{lemma} \label{blemma}
There exists $C_{1}$ and $C_{2}$ such that for all u,v,w $\in$ X, $b(u,v,w)$ satisfies
\begin{align*}
b(u,v,w) &= (u \cdot \nabla v, w) + \frac{1}{2} ((\nabla \cdot u)v, w), \\
b(u,v,w) &\leq C_{1} \| \nabla u \| \| \nabla v \| \| \nabla w \|, \\
b(u,v,w) &\leq C_{2}  \sqrt{\| u \| \|\nabla u\|} \| \nabla v \| \| \nabla w \|.
\end{align*}
\begin{proof}
See Lemma 2.1 on p. 12 of \cite{Temam}.
\end{proof}
\end{lemma}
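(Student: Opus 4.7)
The plan is to treat the three assertions one at a time, with the first being a pure integration-by-parts identity and the latter two being Hölder/Sobolev bounds. Throughout, the fact that $u,v,w\in X=H^1_0(\Omega)^d$ (so all three vanish on $\partial\Omega$) is what makes boundary terms disappear.

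For the identity, I would start from the product rule $\nabla\cdot(u\,(v\cdot w)) = (\nabla\cdot u)(v\cdot w) + u\cdot\nabla(v\cdot w)$ (componentwise in $v,w$). Integrating over $\Omega$ and using $u=0$ on $\partial\Omega$ gives $((\nabla\cdot u)v,w) + (u\cdot\nabla v,w) + (u\cdot\nabla w,v) = 0$, hence $(u\cdot\nabla w,v) = -(u\cdot\nabla v,w) - ((\nabla\cdot u)v,w)$. Substituting into the definition
\begin{align*}
b(u,v,w) = \tfrac12(u\cdot\nabla v,w) - \tfrac12(u\cdot\nabla w,v)
\end{align*}
immediately yields $b(u,v,w) = (u\cdot\nabla v,w) + \tfrac12((\nabla\cdot u)v,w)$.

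For the $H^1$-trilinear bound, I would return to the skew-symmetric form and estimate each half with Hölder: $|(u\cdot\nabla v,w)|\le \|u\|_{L^4}\|\nabla v\|\|w\|_{L^4}$ and similarly for the other term. The Sobolev embedding $H^1_0(\Omega)\hookrightarrow L^4(\Omega)$, valid for $d\le 4$, then converts each $L^4$ norm into $\|\nabla\cdot\|$ times an embedding constant, giving the desired $C_1\|\nabla u\|\|\nabla v\|\|\nabla w\|$.

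For the sharper bound with $\sqrt{\|u\|\|\nabla u\|}$, I would again split $b(u,v,w)$ into its two halves and Hölder them as $|(u\cdot\nabla v,w)|\le \|u\|_{L^3}\|\nabla v\|\|w\|_{L^6}$. The key ingredients are the embedding $H^1_0(\Omega)\hookrightarrow L^6(\Omega)$ (valid for $d\le 3$, which covers the hypothesis) giving $\|w\|_{L^6}\le C\|\nabla w\|$, and the Gagliardo–Nirenberg interpolation $\|u\|_{L^3}\le C\|u\|^{1/2}\|\nabla u\|^{1/2}$, also valid in $d\le 3$. Treating the second term symmetrically yields $b(u,v,w)\le C_2\sqrt{\|u\|\|\nabla u\|}\,\|\nabla v\|\|\nabla w\|$. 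The only subtle point, and the one I would be careful about, is that all three estimates depend on $d\le 3$ being exploited through the choice of Sobolev exponents; in two dimensions the same bounds hold (with room to spare) via Ladyzhenskaya, but the $d=3$ case is what fixes the exponents $L^3$ and $L^6$ and forces the specific $\|u\|^{1/2}\|\nabla u\|^{1/2}$ interpolation. No step is a genuine obstacle — this is a classical computation — so I expect the writeup to be short and, as the author does, to cite Temam's Lemma~2.1 rather than reproduce the estimates.
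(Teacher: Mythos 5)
Your proposal is correct and is precisely the classical argument (integration by parts for the skew-symmetric identity, then H\"older with the $L^4$ and $L^3$--$L^6$ splittings plus the Sobolev/Gagliardo--Nirenberg embeddings for $d\le 3$) that the cited Lemma~2.1 of Temam contains; the paper itself offers no independent proof beyond that citation. No gaps.
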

The discrete time analysis will utilize the following norms, for $-1 \leq k < \infty$:
\begin{align*}
\vertiii{v}_{\infty,k} &:= \max_{0\leq n \leq N} \| v^{n} \|_{k}, \;
\vertiii{v}_{p,k} := \big(\Delta t \sum^{N}_{n = 0} \| v^{n} \|^{p}_{k}\big)^{1/p}.
\end{align*}
The weak formulation of system (\ref{s1}) is:
Find $u:[0,t^{\ast}] \rightarrow X$ and $p:[0,t^{\ast}] \rightarrow Q$ for a.e. $t \in (0,t^{\ast}]$ satisfying
\begin{align}
(u_{t},v) + b(u_,u,v) + \nu (\nabla u,\nabla v) - (p, \nabla \cdot v) &= (f,v) \; \; \forall v \in X, \\
(q, \nabla \cdot u) &= 0 \; \; \forall q \in Q.
\end{align}
\subsection{Finite Element Preliminaries}\label{sectfe}
Consider a quasi-uniform mesh $\Omega_{h} = \{K\}$ of $\Omega$ with maximum triangle diameter length $h$.  Let $X_{h} \subset X$ and $Q_{h} \subset Q$ be conforming finite element spaces consisting of continuous piecewise polynomials of degrees \textit{j} and \textit{l}, respectively.  Moreover, they satisfy the following approximation properties \cite{Ern}, $\forall 1 \leq j,l \leq k,m$:
\begin{align}
\inf_{v_{h} \in X_{h}} \Big\{ \| u - v_{h} \| + h\| \nabla (u - v_{h}) \| \Big\} &\leq Ch^{k+1} \lvert u \rvert_{k+1}, \label{a1}\\
\inf_{q_{h} \in Q_{h}}  \| p - q_{h} \| &\leq Ch^{m} \lvert p \rvert_{m}, \label{a2}
\end{align}
for all $u \in X \cap H^{k+1}(\Omega)^{d}$ and $p \in Q \cap H^{m}(\Omega)$.  Furthermore, we consider those spaces for which the discrete inf-sup condition is satisfied,
\begin{align} \label{infsup} 
\inf_{q_{h} \in Q_{h}} \sup_{v_{h} \in X_{h}} \frac{(q_{h}, \nabla \cdot v_{h})}{\| q_{h} \| \| \nabla v_{h} \|} \geq \alpha > 0,
\end{align}
\noindent where $\alpha$ is independent of $h$.  Examples include the MINI-element and Taylor-Hood family of elements \cite{Layton}.  The space of discretely divergence free functions is defined by 
\begin{align*}
V_{h} := \{v_{h} \in X_{h} : (q_{h}, \nabla \cdot v_{h}) = 0, \forall q_{h} \in Q_{h}\}.
\end{align*}
The discrete inf-sup condition implies that we may approximate functions in $V$ well by functions in $V_{h}$,
\begin{lemma} \label{vlemma}
	Suppose the discrete inf-sup condition (\ref{infsup}) holds, then for any $v \in V$
	\begin{equation*}
		\inf_{v_{h} \in V_{h}} \| \nabla (v - v_{h}) \| \leq C(\alpha)\inf_{v_{h} \in X_{h}} \| \nabla (v - v_{h}) \|.
	\end{equation*}
\end{lemma}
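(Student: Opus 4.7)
\medskip

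\noindent\textbf{Proof plan for Lemma \ref{vlemma}.}

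The plan is a standard Fortin-style correction argument: starting from an arbitrary approximation $w_h \in X_h$ of $v$, I will subtract a small correction $z_h \in X_h$ so that the result lies in $V_h$, and I will control $\|\nabla z_h\|$ through the inf-sup condition (\ref{infsup}).

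First, fix any $w_h \in X_h$. The failure of $w_h$ to be discretely divergence-free is measured by the linear functional $\ell: Q_h \to \mathbb{R}$ defined by $\ell(q_h) := (q_h, \nabla \cdot(w_h - v))$. Since $v \in V$ and $Q_h \subset Q$, $(q_h,\nabla\cdot v) = 0$, so $\ell(q_h) = (q_h, \nabla\cdot w_h)$. Cauchy--Schwarz and $\|\nabla\cdot w\| \leq \sqrt{d}\,\|\nabla w\|$ give $|\ell(q_h)| \leq \sqrt{d}\,\|q_h\|\,\|\nabla(w_h - v)\|$.

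Next, I invoke the discrete inf-sup condition to realize $\ell$ as a discrete divergence. The inf-sup inequality (\ref{infsup}) is equivalent to the statement that the discrete divergence map $X_h \to Q_h'$, $v_h \mapsto (\cdot,\nabla\cdot v_h)$, admits a bounded right inverse with norm at most $1/\alpha$; equivalently, for every $\ell \in Q_h'$ there exists $z_h \in X_h$ such that $(q_h,\nabla\cdot z_h) = \ell(q_h)$ for all $q_h \in Q_h$ and $\|\nabla z_h\| \leq \alpha^{-1}\|\ell\|_{Q_h'}$. Applying this with the $\ell$ above yields $z_h \in X_h$ with
\begin{equation*}
(q_h,\nabla\cdot z_h) = (q_h,\nabla\cdot(w_h - v)) \quad \forall q_h \in Q_h,
\qquad \|\nabla z_h\| \leq \frac{\sqrt{d}}{\alpha}\,\|\nabla(w_h - v)\|.
\end{equation*}

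Finally, set $v_h := w_h - z_h$. By construction $(q_h, \nabla \cdot v_h) = (q_h, \nabla\cdot w_h) - (q_h, \nabla\cdot(w_h - v)) = (q_h,\nabla\cdot v) = 0$ for every $q_h \in Q_h$, so $v_h \in V_h$. The triangle inequality yields
\begin{equation*}
\|\nabla(v - v_h)\| \leq \|\nabla(v - w_h)\| + \|\nabla z_h\| \leq \Bigl(1 + \tfrac{\sqrt{d}}{\alpha}\Bigr)\|\nabla(v - w_h)\|,
\end{equation*}
and taking the infimum over $w_h \in X_h$ on the right and over $v_h \in V_h$ on the left gives the claim with $C(\alpha) = 1 + \sqrt{d}/\alpha$. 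The one step that is not purely algebraic is the existence of the bounded right inverse in the paragraph above; this follows from (\ref{infsup}) by a standard closed-range / orthogonal-complement argument in the finite-dimensional space $X_h$, and is the only place where any real work occurs.
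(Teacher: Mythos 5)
Your proof is correct. The paper does not supply its own argument for Lemma \ref{vlemma} --- it simply cites Chapter 2, Theorem 1.1 of Girault--Raviart --- and your Fortin-style correction (solve the discrete divergence equation for $z_{h}$ via the bounded right inverse guaranteed by (\ref{infsup}), then set $v_{h}=w_{h}-z_{h}$ and apply the triangle inequality) is precisely the standard proof found in that reference, yielding the explicit constant $C(\alpha)=1+\sqrt{d}/\alpha$.
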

\begin{proof}
	See Chapter 2, Theorem 1.1 on p. 59 of \cite{Girault}.
\end{proof}
The spaces $X_{h}^{\ast}$ and $V_{h}^{\ast}$, dual to $X_{h}$ and $V_{h}$, are endowed with the following dual norms
\begin{align*}
\| w \|_{X_{h}^{\ast}} := \sup_{v_{h} \in X_{h}}\frac{(w,v_{h})}{\|\nabla v_{h}\|}, \; \;
\| w \|_{V_{h}^{\ast}} := \sup_{v_{h} \in V_{h}}\frac{(w,v_{h})}{\|\nabla v_{h}\|}.
\end{align*}
Interestingly, these norms are equivalent for functions in $V_{h}$.
\begin{lemma} \label{glemma}
	Let $w \in V_{h}$.  Then, there exists $C_{\ast}>0$, independent of $h$, such that
	\begin{equation*}
	C_{\ast} \| w \|_{X_{h}^{\ast}} \leq \| w \|_{V_{h}^{\ast}} \leq \| w \|_{X_{h}^{\ast}}.
	\end{equation*}
\end{lemma}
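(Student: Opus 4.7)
The upper bound $\|w\|_{V_{h}^{\ast}} \leq \|w\|_{X_{h}^{\ast}}$ is immediate: since $V_{h} \subset X_{h}$, the supremum in the definition of $\|w\|_{V_{h}^{\ast}}$ is taken over a subfamily of the test functions used to define $\|w\|_{X_{h}^{\ast}}$, hence is no larger.  This half does not use the hypothesis $w \in V_{h}$.

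For the nontrivial inequality $C_{\ast} \|w\|_{X_{h}^{\ast}} \leq \|w\|_{V_{h}^{\ast}}$, fix an arbitrary $v_{h} \in X_{h}$.  My plan is to exhibit a surrogate $\tilde{v}_{h} \in V_{h}$ satisfying
\begin{align*}
(w, v_{h}) &= (w, \tilde{v}_{h}), \\
\| \nabla \tilde{v}_{h} \| &\leq C \| \nabla v_{h} \|,
\end{align*}
from which $(w,v_{h}) / \|\nabla v_{h}\| \leq C (w,\tilde{v}_{h}) / \|\nabla \tilde{v}_{h}\| \leq C \|w\|_{V_{h}^{\ast}}$; taking the supremum over $v_{h} \in X_{h}$ then yields the claim with $C_{\ast} = 1/C$.

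The natural candidate is $\tilde{v}_{h} := P_{h} v_{h}$, the $L^{2}$-orthogonal projection of $v_{h}$ onto $V_{h}$.  The first required identity is then free of charge: by construction $v_{h} - P_{h} v_{h}$ is $L^{2}$-orthogonal to $V_{h}$, and since $w \in V_{h}$ we have $(w, v_{h} - P_{h} v_{h}) = 0$, i.e., $(w, v_{h}) = (w, P_{h} v_{h})$.  This is precisely where the hypothesis $w \in V_{h}$ is used.

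The crux, and the step I expect to be the main obstacle, is the stability bound $\| \nabla P_{h} v_{h} \| \leq C \| \nabla v_{h} \|$, i.e., $H^{1}$-stability of the $L^{2}$-projection onto $V_{h}$.  Unlike $H^{1}$-stability of the $L^{2}$-projection onto the full space $X_{h}$, which is standard under quasi-uniformity, projecting onto the constrained space $V_{h}$ is delicate because $V_{h}$ typically lacks a local basis.  A clean route, following Galvin \cite{Galvin}, is to use the discrete inf-sup condition (\ref{infsup}) to build a Fortin-type operator $\Pi_{h}: X_{h} \to V_{h}$ that is $H^{1}$-bounded by construction, and then to compare $P_{h}$ with $\Pi_{h}$ via the quasi-uniformity of $\Omega_{h}$ and standard inverse inequalities to absorb the remainder.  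Granting this stability, the argument above closes and gives an explicit $C_{\ast}$ depending only on $\alpha$ and mesh regularity.
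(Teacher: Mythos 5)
Your proposal is correct and follows essentially the same route as the paper, which simply cites Galvin \cite{Galvin} for the proof but notes in the accompanying remark (and establishes in the appendix) that the key ingredient is precisely the $H^{1}(\Omega)^{d}$ stability of the $L^{2}(\Omega)^{d}$-orthogonal projection onto $V_{h}$ that you identify as the crux, applied after the same reduction $(w,v_{h})=(w,P_{h}v_{h})$ for $w\in V_{h}$. The only divergence is in how that stability is obtained: the paper's appendix uses the Scott--Zhang interpolant, an inverse estimate on the quasi-uniform mesh, and the $L^{2}$ best-approximation property of $V_{h}$ (which rests on the inf-sup condition via Lemma \ref{vlemma}), rather than the Fortin-type comparison you sketch, but both yield a constant depending only on $\alpha$ and mesh regularity.
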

\begin{proof}
	See Lemma 1 on p. 243 of \cite{Galvin}.
\end{proof}
\textbf{Remark:}  The proof of the above lemma utilizes the $H^{1}(\Omega)^{d}$ stability of the $L^{2}(\Omega)^{d}$ orthogonal projection onto $V_{h}$.  This result can be extended to regular meshes under certain conditions on the mesh; see \cite{Bank,Bramble} and references therein.
\section{Numerical Scheme}\label{sectnum}
Denote the fully discrete solutions by $u^{n}_{h}$ and $p^{n}_{h}$ at time levels $t^{n} = n\Delta t$, $n = 0,1,...,N$, and $t^{\ast}=N\Delta t$.  The fully discrete approximations of (\ref{s1}) are
\\ \underline{LIBE:}
Given $u^{n}_{h} \in X_{h}$, find $(u^{n+1}_{h}, p^{n+1}_{h}) \in (X_{h},Q_{h})$ satisfying
\begin{align}\label{libe}
(\frac{u^{n+1}_{h} - u^{n}_{h}}{\Delta t},v_{h}) + b(u^{n}_{h},u^{n+1}_{h},v_{h}) + \nu (u^{n+1}_{h}, v_{h}) - (p^{n+1}_{h},\nabla \cdot v_{h}) = (f^{n+1},v_{h}) \; \; \forall v_{h} \in X_{h}, \\
(\nabla \cdot u^{n+1}_{h}, q_{h}) \; \; \forall q_{h} \in Q_{h}. \notag
\end{align}
\section{Stability Analysis}\label{sectstab}
In this section, we prove that the pressure approximation is stable in \\ $L^{1}(0,t^{\ast};L^{2}(\Omega))$ and $L^{2}(0,t^{\ast};L^{2}(\Omega))$.  We first state, without proof, Theorem \ref{t1} regarding the stability of the velocity approximation.  Stability of the pressure approximation follows from this theorem as a proven corollary.
\begin{theorem}\label{t1}
Consider the numerical scheme (\ref{libe}).  Suppose $f \in L^{2}(0,t^{\ast};H^{-1}(\Omega)^{d})$.  Then, the velocity approximation satisfies for all $N \geq 1$
\begin{equation*}
	\|u^{N}_{h}\|^{2} + \sum_{n=0}^{N-1} \|u^{n+1}_{h} - u^{n}_{h}\|^{2} + \nu \vertiii{\nabla u_{h}}^{2}_{2,0} \leq {\nu}^{-1}\vertiii{f}^{2}_{2,-1} + \|u^{0}_{h}\|^{2}.
\end{equation*}
\end{theorem}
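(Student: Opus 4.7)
The plan is a standard discrete energy argument for the Backward Euler method applied to NSE, obtained by testing with $v_h = u^{n+1}_h$ in the momentum equation (\ref{libe}). Four ingredients do all the work: the polarization identity for the discrete time derivative, the skew-symmetry of the trilinear form $b$, the discrete divergence-free property to eliminate the pressure, and Cauchy–Schwarz/Young on the forcing.

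First I would choose $v_h = u^{n+1}_h$ in (\ref{libe}). For the time-difference term, applying the identity
\begin{equation*}
(u^{n+1}_h - u^n_h, u^{n+1}_h) = \tfrac{1}{2}\bigl(\|u^{n+1}_h\|^2 - \|u^n_h\|^2 + \|u^{n+1}_h - u^n_h\|^2\bigr)
\end{equation*}
produces exactly the first two terms on the left-hand side of the claimed bound (after summation and telescoping). The viscous term contributes $\nu\|\nabla u^{n+1}_h\|^2$. The nonlinear term vanishes because $b$ is explicitly skew-symmetric, so $b(u^n_h, u^{n+1}_h, u^{n+1}_h) = 0$. For the pressure term, note that the discrete incompressibility constraint forces $u^{n+1}_h \in V_h$; choosing $q_h = p^{n+1}_h$ in the continuity equation shows $(p^{n+1}_h, \nabla \cdot u^{n+1}_h) = 0$, so the pressure drops out.

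Next I would bound the right-hand-side forcing by duality and Young's inequality:
\begin{equation*}
(f^{n+1}, u^{n+1}_h) \leq \|f^{n+1}\|_{-1}\|\nabla u^{n+1}_h\| \leq \frac{1}{2\nu}\|f^{n+1}\|_{-1}^2 + \frac{\nu}{2}\|\nabla u^{n+1}_h\|^2.
\end{equation*}
The $\tfrac{\nu}{2}\|\nabla u^{n+1}_h\|^2$ term is absorbed into the viscous term on the left, leaving $\tfrac{\nu}{2}\|\nabla u^{n+1}_h\|^2$ on the left together with the $\tfrac{1}{2\nu}\|f^{n+1}\|_{-1}^2$ bound on the right.

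Finally, I would multiply through by $2\Delta t$ and sum $n=0,\ldots,N-1$. The time-difference terms telescope to $\|u^N_h\|^2 - \|u^0_h\|^2 + \sum_{n=0}^{N-1}\|u^{n+1}_h - u^n_h\|^2$, the viscous terms assemble into $\nu\vertiii{\nabla u_h}^2_{2,0}$, and the forcing sum becomes $\nu^{-1}\vertiii{f}^2_{2,-1}$, yielding the stated estimate. There is no real obstacle here; the only point requiring a bit of care is recognizing that the scheme's constraint $(\nabla\cdot u^{n+1}_h, q_h) = 0$ places $u^{n+1}_h$ in $V_h$ so that the pressure contribution drops, and being careful to take the correct splitting constant in Young's inequality so that the viscous dissipation absorbs the gradient on the right.
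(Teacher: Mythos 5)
The paper states Theorem \ref{t1} explicitly \emph{without proof}, so there is no argument of the author's to compare against; your energy argument --- testing with $v_{h}=u^{n+1}_{h}$, the polarization identity for the time difference, skew-symmetry killing $b(u^{n}_{h},u^{n+1}_{h},u^{n+1}_{h})$, elimination of the pressure via the discrete divergence constraint, and duality plus Young's inequality with the $\nu/2$ split before multiplying by $2\Delta t$ and telescoping --- is the standard and correct way to establish exactly the stated bound. The only cosmetic mismatch is that your telescoped dissipation term is $\nu\Delta t\sum_{n=1}^{N}\|\nabla u^{n}_{h}\|^{2}$, which omits the $n=0$ contribution that the norm $\vertiii{\nabla u_{h}}_{2,0}$ as defined (summing from $n=0$ to $N$) would include; this indexing slack is present in the theorem statement itself and is not a defect of your reasoning.
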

As a consequence of the above, the following holds.
\begin{corollary}\label{c1}
Suppose Theorem \ref{t1} holds.  Then, the pressure approximation satisfies for all $N\geq 1$
\begin{align*}
\alpha \Delta t \sum_{n=0}^{N-1}\|p^{n+1}_{h}\| \leq (1+C^{-1}_{\ast})\Big[\big(C_{1}\nu^{-2}\vertiii{f}_{2,-1} + 2\sqrt{t^{\ast}}\big)\vertiii{f}_{2,-1} + \big(C_{1}\nu^{-1}\|u^{0}_{h}\| + \sqrt{\nu t^{\ast}}\big)\|u^{0}_{h}\|\Big],
\end{align*}
and
\begin{align*}
\alpha \vertiii{p_{h}}_{2,0} \leq \sqrt{3}(1+C^{-1}_{\ast})\Big[\sqrt{C^{2}_{1}C^{2}_{h}\nu^{-2} + 1}\vertiii{f}_{2,-1} + \sqrt{C^{2}_{1}C^{2}_{h}\nu^{-1} + \nu}\|u^{0}_{h}\|\Big].
\end{align*}
\end{corollary}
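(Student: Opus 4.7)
The plan is to bound $\|p^{n+1}_h\|$ pointwise in time via the discrete inf-sup condition (\ref{infsup}), then sum in time and invoke Theorem \ref{t1} to control the velocity-dependent quantities that appear. Starting from (\ref{libe}), for any $v_h \in X_h$ I isolate
\begin{equation*}
(p^{n+1}_h, \nabla \cdot v_h) = (f^{n+1},v_h) - \Big(\tfrac{u^{n+1}_h - u^n_h}{\Delta t}, v_h\Big) - b(u^n_h, u^{n+1}_h, v_h) - \nu(\nabla u^{n+1}_h, \nabla v_h),
\end{equation*}
divide by $\|\nabla v_h\|$, and take the supremum so that (\ref{infsup}) yields
\begin{equation*}
\alpha \|p^{n+1}_h\| \leq \|f^{n+1}\|_{-1} + \Big\|\tfrac{u^{n+1}_h - u^n_h}{\Delta t}\Big\|_{X_h^{\ast}} + C_1\|\nabla u^n_h\|\|\nabla u^{n+1}_h\| + \nu\|\nabla u^{n+1}_h\|,
\end{equation*}
where the trilinear bound from Lemma \ref{blemma} has been used.

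The core step is the discrete time derivative term. Since $u^{n+1}_h, u^n_h \in V_h$, so is their difference, and Lemma \ref{glemma} gives $\|\cdot\|_{X_h^{\ast}} \leq C_{\ast}^{-1}\|\cdot\|_{V_h^{\ast}}$. Restricting the test function to $v_h \in V_h$ kills the pressure contribution in (\ref{libe}), so the $V_h^{\ast}$ norm of $(u^{n+1}_h - u^n_h)/\Delta t$ is dominated by exactly the same forcing, convection, and viscous terms already appearing on the right-hand side. Substituting, the pointwise-in-time bound collapses to
\begin{equation*}
\alpha \|p^{n+1}_h\| \leq (1+C_{\ast}^{-1})\Big[\|f^{n+1}\|_{-1} + C_1\|\nabla u^n_h\|\|\nabla u^{n+1}_h\| + \nu\|\nabla u^{n+1}_h\|\Big].
\end{equation*}
For the $L^1$-in-time estimate, multiply by $\Delta t$ and sum; Cauchy--Schwarz gives $\Delta t\sum\|f^{n+1}\|_{-1} \leq \sqrt{t^{\ast}}\vertiii{f}_{2,-1}$, $\Delta t\sum\|\nabla u^n_h\|\|\nabla u^{n+1}_h\| \leq \vertiii{\nabla u_h}^2_{2,0}$, and $\nu\Delta t\sum\|\nabla u^{n+1}_h\| \leq \nu\sqrt{t^{\ast}}\vertiii{\nabla u_h}_{2,0}$. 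Applying Theorem \ref{t1} (and $\sqrt{a+b}\leq\sqrt{a}+\sqrt{b}$ for the unsquared norm) and regrouping reproduces the first claimed inequality. For the $L^2$-in-time estimate, square the pointwise bound, use $(a+b+c)^2 \leq 3(a^2+b^2+c^2)$ to produce the factor $\sqrt{3}$, multiply by $\Delta t$, sum, bound $\Delta t\sum\|\nabla u^n_h\|^2\|\nabla u^{n+1}_h\|^2 \leq C_h^2\vertiii{\nabla u_h}^2_{2,0}$ with $C_h := \vertiii{\nabla u_h}_{\infty,0}$, and apply Theorem \ref{t1} once more to obtain the second claimed inequality.

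The main obstacle is squarely the discrete time derivative $\|(u^{n+1}_h - u^n_h)/\Delta t\|_{X_h^{\ast}}$: it is not controlled by the energy estimate of Theorem \ref{t1}, and naively one cannot test (\ref{libe}) against it in $X_h$ because the pressure term is present. The resolution is precisely Lemma \ref{glemma}, which trades the $X_h^{\ast}$ norm for a $V_h^{\ast}$ norm at the price of a harmless constant $C_{\ast}^{-1}$, after which the pressure is eliminated by testing and all remaining quantities are already bounded by Theorem \ref{t1}. Everything else is bookkeeping with Cauchy--Schwarz and elementary inequalities.
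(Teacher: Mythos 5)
Your proposal is correct and follows essentially the same route as the paper: isolate the pressure via the discrete inf-sup condition, control the discrete time derivative in $X_h^{\ast}$ by testing against $V_h$ (which eliminates the pressure) and invoking the norm equivalence of Lemma \ref{glemma}, then sum in time and apply Theorem \ref{t1}. The only difference is cosmetic ordering — the paper bounds the time-derivative term first and then isolates the pressure — and all your constants and elementary inequalities match the paper's.
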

\begin{proof}
Consider equation (\ref{libe}).  Let $(v_{h},q_{h}) \in (V_{h},Q_{h})$ and isolate the discrete time derivative.  Then, 
\begin{align}\label{derivlibe}
(\frac{u^{n+1}_{h} - u^{n}_{h}}{\Delta t},v_{h}) = -b(u^{n}_{h},u^{n+1}_{h},v_{h}) - \nu (u^{n+1}_{h}, v_{h}) + (f^{n+1},v_{h}) \; \; \forall v_{h} \in V_{h}.
\end{align}
The terms on the right-hand side can be bounded using Lemma \ref{blemma}, the Cauchy-Schwarz inequality, and duality, respectively,
\begin{align}\label{est1libe}
-b(u^{n}_{h},u^{n+1}_{h},v_{h}) &\leq C_{1}\|\nabla u^{n}_{h}\|\|\nabla u^{n+1}_{h}\|\|\nabla v_{h}\|,
\\-\nu (\nabla u^{n+1}_{h}, \nabla v_{h}) &\leq \nu \|\nabla u^{n+1}_{h}\|\|\nabla v_{h}\|,
\\(f^{n+1},v_{h}) &\leq \|f^{n+1}\|_{-1}\|\nabla v_{h}\|.\label{est3libe}
\end{align}
Using the above estimates in equation (\ref{derivlibe}), dividing both sides by $\|\nabla v_{h}\|$, and taking the supremum over $v_{h} \in V_{h}$ yields
\begin{align}
\|\frac{u^{n+1}_{h} - u^{n}_{h}}{\Delta t}\|_{V_{h}^{\ast}} \leq \big( C_{1}\|\nabla u^{n}_{h}\| + \nu\big)\|\nabla u^{n+1}_{h}\| + \|f^{n+1}\|_{-1}.
\end{align}
Lemma \ref{glemma} then implies
\begin{align}\label{derivestlibe}
	\|\frac{u^{n+1}_{h} - u^{n}_{h}}{\Delta t}\|_{X_{h}^{\ast}} \leq C^{-1}_{\ast}\Big[\big( C_{1}\|\nabla u^{n}_{h}\| + \nu\big)\|\nabla u^{n+1}_{h}\| + \|f^{n+1}\|_{-1}\Big].
\end{align}
Now, reconsider equation (\ref{libe}) with $v_{h} \in X_{h}$.  Isolate the pressure term and use the estimates (\ref{est1libe}) - (\ref{est3libe}).  Then,
\begin{align}
(p^{n+1}_{h},\nabla \cdot v_{h}) \leq (\frac{u^{n+1}_{h} - u^{n}_{h}}{\Delta t},v_{h}) + \big( C_{1}\|\nabla u^{n}_{h}\| + \nu\big)\|\nabla u^{n+1}_{h}\|\|\nabla v_{h}\| + \| f^{n+1}\|_{-1}\|\nabla v_{h}\|.
\end{align}
Divide both sides by $\|\nabla v_{h}\|$, take the supremum over $v_{h} \in X_{h}$, and use both the discrete inf-sup condition \ref{infsup} and estimate (\ref{derivestlibe}).  Then,
\begin{align}\label{pestlibe}
\alpha \|p^{n+1}_{h}\| \leq (1+C^{-1}_{\ast})\Big[\big( C_{1}\|\nabla u^{n}_{h}\| + \nu\big)\|\nabla u^{n+1}_{h}\| + \|f^{n+1}\|_{-1}\Big].
\end{align}
Multiplying by $\Delta t$, summing from $n=0$ to $n=N-1$, and using the Cauchy-Schwarz inequality on the right-hand side terms yields
\begin{align*}
\alpha \Delta t \sum_{n=0}^{N-1} \|p^{n+1}_{h}\| \leq (1+C^{-1}_{\ast})\Big[\big( C_{1}\vertiii{\nabla u_{h}}_{2,0} + \nu\sqrt{t^{\ast}}\big)\vertiii{\nabla u_{h}}_{2,0} + \sqrt{t^{\ast}}\vertiii{f}_{2,-1}\Big].
\end{align*}
Lastly, application of Theorem \ref{t1} on the velocity terms gives
\begin{align}\label{presult1}
\alpha \Delta t \sum_{n=0}^{N-1}\|p^{n+1}_{h}\| \leq (1+C^{-1}_{\ast})\Big[\big(C_{1}\nu^{-2}\vertiii{f}_{2,-1} + 2\sqrt{t^{\ast}}\big)\vertiii{f}_{2,-1} + \big(C_{1}\nu^{-1}\|u^{0}_{h}\| + \sqrt{\nu t^{\ast}}\big)\|u^{0}_{h}\|\Big].
\end{align}
For the second result, reconsider equation (\ref{pestlibe}).  Square both sides and notice that the right-hand side is the square of the $l_{1}$ norm of the 3-vector $\chi = (C_{1}\|\nabla u^{n}_{h}\|\|\nabla u^{n+1}_{h}\|,\nu\|\nabla u^{n}_{h}\|,\| f^{n+1}\|_{-1})^{T}$.  Consequently, using $\vert \chi \vert_{l_{1}}^{2} \leq 3 \vert \chi \vert_{l_{2}}^{2}$, multiplying by $\Delta t$, and summing from $n=0$ to $n=N-1$ we have
\begin{align*}
	\alpha^{2} \vertiii{p_{h}}^{2}_{2,0} &\leq 3(1+C^{-1}_{\ast})^{2}\Big[ C^{2}_{1}\Delta t \sum_{n=0}^{N-1} \|\nabla u^{n}_{h}\|^{2}\|\nabla u^{n+1}_{h}\|^{2} + \nu^{2}\vertiii{\nabla u_{h}}^{2}_{2,0} + \vertiii{f}^{2}_{2,-1}\Big].
\end{align*}
Now, $u^{n}_{h} \in X_{h}$ for every $0\leq n \leq N-1$; that is, $\max_{0\leq n \leq N-1} \|\nabla u^{n}_{h}\| \leq C_{h} <\infty$.  Thus, using Theorem \ref{t1} yields
\begin{align}
\alpha^{2} \vertiii{p_{h}}^{2}_{2,0} &\leq 3(1+C^{-1}_{\ast})^{2} \Big[ \big(C^{2}_{1}C^{2}_{h} + \nu^{2}\big)\vertiii{\nabla u_{h}}^{2}_{2,0} + \vertiii{f}^{2}_{2,-1}\Big]
\\ &\leq 3(1+C^{-1}_{\ast})^{2}\Big[(C^{2}_{1}C^{2}_{h}\nu^{-2} + 1)\vertiii{f}^{2}_{2,-1} + \big(C^{2}_{1}C^{2}_{h}\nu^{-1} + \nu\big)\|u^{0}_{h}\|^{2}\Big]. \notag
\end{align}
The result follows after taking the square root.
\end{proof}

In the above, note that growth with respect to $t^{\ast}$ is allowed in $L^{1}(0,t^{\ast};L^{2}(\Omega))$.  Alternatively, an unknown, bounded linear growth factor $C_{h}$ is present with respect to $L^{2}(0,t^{\ast};L^{2}(\Omega))$.  Clearly, the velocity approximation is most stable; that is, stable \textit{uniformly in time} in the appropriate norms.
\section{Error Analysis}\label{secterror}
Herein, we prove optimal-order convergence of the pressure approximation in $L^{1}(0,t^{\ast};L^{2}(\Omega))$ and $L^{2}(0,t^{\ast};L^{2}(\Omega))$.  The outline of this section is identical to the last.  We first state a convergence theorem for the velocity approximation, Theorem \ref{t3}, below.  We then prove convergence for the pressure approximation as a corollary.

Denote $u^{n}$ and $p^{n}$ as the true solutions at time $t^{n} = n\Delta t$.  Assume the solutions satisfy the following regularity assumptions:
\begin{align} \label{error:regularity1}
	u &\in L^{\infty}(0,t^{\ast};X \cap H^{k+1}(\Omega)), \;  u_{t} \in L^{2}(0,t^{\ast};H^{k+1}(\Omega)),
	\\ u_{tt} &\in L^{2}(0,t^{\ast};H^{k+1}(\Omega)), \; p \in L^{2}(0,t^{\ast};Q \cap H^{m}(\Omega)).\label{error:regularity2}
\end{align}
\textbf{Remark:} These assumptions are consistent with what is seen in the literature; more commonly, increased regularity is demanded.\\
The errors for the solution variables are denoted
\begin{align*}
	e^{n}_{u} &= u^{n} - u^{n}_{h}, \; e^{n}_{p} = p^{n} - p^{n}_{h}.
\end{align*}
We begin again with a result for the velocity approximation: optimal convergence.
\begin{theorem} \label{t3}
For u and p satisfying (\ref{s1}), suppose that $u^{0}_{h} \in X_{h}$ is an approximation of $u^{0}$ to within the accuracy of the interpolant.  Then there exists constants $C, \; C_{\triangle}>0$ such that LIBE satisfies
\begin{align*}
\|e^{N}_{u}\|^{2} + \sum_{n = 0}^{N-1} \|e^{n+1}_{\hat{u}} - e^{n}_{u}\|^{2} + \nu \vertiii{\nabla e_{\hat{u}}}^{2}_{2,0} \leq C \exp(C_{\triangle}t^{\ast}) \Big(h^{2k} + \Delta t^{2} + initial \;errors\Big).
\end{align*}
\end{theorem}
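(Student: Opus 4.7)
The plan is to derive an error equation by subtracting LIBE from the weak NSE at $t^{n+1}$ tested against $v_h \in V_h$, split the error via an interpolant that lives in $V_h$ so that the pressure term can be handled cleanly, and finally invoke discrete Gronwall. Concretely, I would write the weak NSE as
$(\frac{u^{n+1} - u^{n}}{\Delta t}, v_h) + b(u^n, u^{n+1}, v_h) + \nu(\nabla u^{n+1}, \nabla v_h) - (p^{n+1}, \nabla \cdot v_h) = (f^{n+1}, v_h) + \tau^{n+1}(v_h)$,
where the consistency functional $\tau^{n+1}$ collects the time-discretization residual (from $u_t(t^{n+1})$ versus the backward difference) and the linearization residual (from $u^{n+1}\cdot\nabla u^{n+1}$ versus $u^{n}\cdot\nabla u^{n+1}$). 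Subtracting (\ref{libe}) from this identity yields an error equation for $e^n_u = u^n - u^n_h$ with source terms $\tau^{n+1}(v_h)$ and a pressure term $(p^{n+1} - q_h, \nabla\cdot v_h)$ that I am free to optimize over $q_h \in Q_h$ thanks to $v_h \in V_h$.

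Next I would split $e^n_u = \eta^n - \phi^n_h$ where $\eta^n := u^n - \tilde u^n$ is the error of an interpolant $\tilde u^n \in V_h$ (available via Lemma \ref{vlemma} together with (\ref{a1})), and $\phi^n_h := u^n_h - \tilde u^n \in V_h$ is the discrete error I want to control. Setting $v_h = \phi^{n+1}_h$ and using the polarization identity
$(\phi^{n+1}_h - \phi^n_h, \phi^{n+1}_h) = \tfrac{1}{2}(\|\phi^{n+1}_h\|^2 - \|\phi^n_h\|^2 + \|\phi^{n+1}_h - \phi^n_h\|^2)$
puts $\tfrac{1}{2\Delta t}(\|\phi^{n+1}_h\|^2 - \|\phi^n_h\|^2 + \|\phi^{n+1}_h - \phi^n_h\|^2) + \nu\|\nabla \phi^{n+1}_h\|^2$ on the left. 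The right-hand side contains (i) the interpolation-related terms in $\eta$, bounded by Cauchy-Schwarz and (\ref{a1}); (ii) the pressure best-approximation term, bounded by (\ref{a2}); (iii) the consistency error, which I expand by Taylor's theorem with integral remainder to extract $\Delta t^2 \int_{t^n}^{t^{n+1}} \|u_{tt}\|^2\,dt$ and an analogous $\Delta t^2 \|\nabla u_t\|^2$ piece from the linearization; and (iv) the nonlinear residual $b(u^n,u^{n+1},\phi^{n+1}_h) - b(u^n_h,u^{n+1}_h,\phi^{n+1}_h)$.

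The last term is the main obstacle. I would rewrite it as $b(e^n_u, u^{n+1}, \phi^{n+1}_h) + b(u^n_h, e^{n+1}_u, \phi^{n+1}_h)$, then expand each factor via $e = \eta - \phi_h$, so that the one truly dangerous piece $b(u^n_h, \phi^{n+1}_h, \phi^{n+1}_h)$ vanishes by skew-symmetry. For the remaining pieces I would use the sharper Ladyzhenskaya-type bound of Lemma \ref{blemma}, i.e. $b(a,b,c) \leq C_2 \sqrt{\|a\|\|\nabla a\|}\,\|\nabla b\|\,\|\nabla c\|$, on the $\phi^n_h$ factor so that Young's inequality produces a term $\|\phi^n_h\|^2$ suitable for Gronwall rather than one with $\|\nabla \phi^n_h\|^2$ that could not be absorbed. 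The discrete velocity factor $\|\nabla u^n_h\|$ that resurfaces is controlled by Theorem \ref{t1}, while $\|\nabla u^{n+1}\|$ is controlled by the $L^\infty$-in-time regularity in (\ref{error:regularity1}).

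Finally, after using Young's inequality to absorb all $\nu\|\nabla\phi^{n+1}_h\|^2$ contributions into the left, I would multiply by $\Delta t$, sum from $n=0$ to $N-1$, and apply discrete Gronwall; the constant $C_\triangle$ will absorb cumulative factors such as $\nu^{-3}\vertiii{\nabla u}_{\infty,0}^{4}$ arising from the nonlinear estimates, and $C$ will absorb $\vertiii{u_{tt}}_{2,0}^{2}$, $\vertiii{\nabla u_t}_{2,0}^{2}$, $\vertiii{u}_{\infty,k+1}^{2}$, and $\vertiii{p}_{2,m}^{2}$. The bound on $\phi^N_h$ plus the triangle inequality and (\ref{a1}) applied to $\eta^N$ then yields the stated estimate in terms of $h^{2k} + \Delta t^2 + \text{initial errors}$.
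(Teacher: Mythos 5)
The paper states Theorem \ref{t3} \emph{without proof} (the text of Section \ref{secterror} explicitly says it will only ``state a convergence theorem for the velocity approximation'' and then prove the pressure corollary), so there is no in-paper argument to compare yours against. That said, your outline is the canonical proof of this result and I see no gap in it: the error equation you derive matches the paper's (\ref{erroreq}); the splitting $e^n_u = \eta^n - \phi^n_h$ with $\phi^n_h \in V_h$ is exactly the decomposition the paper itself uses later in Corollary \ref{c3}; testing with $\phi^{n+1}_h$, killing $b(u^n_h,\phi^{n+1}_h,\phi^{n+1}_h)$ by skew-symmetry, and using the $C_2\sqrt{\|\cdot\|\|\nabla\cdot\|}$ bound on the $\phi^n_h$ slot so that Young's inequality yields a Gronwall-compatible $\|\phi^n_h\|^2$ (at the cost of $\nu^{-3}\vertiii{\nabla u}^4_{\infty,0}$ in $C_\triangle$) are all the right moves, and your use of the explicit Gronwall variable at level $n$ keeps the estimate unconditional. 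Two small points worth making explicit when you write this up: (i) the Young step on $b(\phi^n_h,u^{n+1},\phi^{n+1}_h)$ also produces a $\tfrac{\nu}{8}\|\nabla\phi^n_h\|^2$ at the \emph{previous} level, which is only absorbed after summation and leaves a $\|\nabla\phi^0_h\|^2$ contribution that must be counted among the initial errors; and (ii) Theorem \ref{t1} controls $\|\nabla u^n_h\|$ only in $\ell^2$-in-time, not uniformly, so the term $b(u^n_h,\eta^{n+1},\phi^{n+1}_h)$ must be summed as $\max_n\|\nabla\eta^{n+1}\|^2\cdot\Delta t\sum_n\|\nabla u^n_h\|^2$ rather than by pulling $\|\nabla u^n_h\|$ out as a constant --- your phrasing ``controlled by Theorem \ref{t1}'' is correct but should be implemented in this $\ell^2$ sense.
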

As a consequence, we have:
\begin{corollary} \label{c3}
Suppose Theorem \ref{t3}  and its hypotheses hold.  Then there exists a constant $C>0$ such that LIBE satisfies
	\begin{align*}
		\alpha\Big( \sum_{n=0}^{N-1}\|e^{n+1}_{p}\| + \vertiii{e_{p}}_{2,0}\Big)\leq C \exp(\frac{C_{\triangle}t^{\ast}}{2})\Big(h^{k} + \Delta t + initial \;errors\Big).
	\end{align*}
\end{corollary}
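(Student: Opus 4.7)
The plan is to adapt the proof of Corollary \ref{c1} by working with the error equation in place of (\ref{libe}). I would first form the error equation by subtracting LIBE at $t^{n+1}$ from the weak form of (\ref{s1}) at $t^{n+1}$, obtaining, for all $v_h \in X_h$,
\begin{align*}
\Big(\frac{e_u^{n+1}-e_u^n}{\Delta t},v_h\Big) + b(u^{n+1},u^{n+1},v_h) - b(u_h^n,u_h^{n+1},v_h) + \nu(\nabla e_u^{n+1},\nabla v_h) - (e_p^{n+1},\nabla\cdot v_h) = (\tau^{n+1},v_h),
\end{align*}
where $\tau^{n+1}$ is the backward-Euler consistency error, of size $O(\Delta t)$ by the $u_{tt}$ regularity in (\ref{error:regularity2}). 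Because $e_u^{n+1} \notin V_h$ in general, Lemma \ref{glemma} does not apply to it directly; instead I would split $e_u^{n+1} = \eta^{n+1} + \phi^{n+1}$ with $\eta^{n+1}=u^{n+1}-\hat u^{n+1}$ and $\phi^{n+1}=\hat u^{n+1}-u_h^{n+1} \in V_h$, where $\hat u^{n+1}$ denotes the discretely divergence-free interpolant of $u^{n+1}$ already implicit in Theorem \ref{t3}. The quantity $\phi^{n+1}$ is now an element of $V_h$, which is precisely what Lemma \ref{glemma} needs.

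Next, mirroring the stability argument, I would restrict to $v_h \in V_h$ so that $(p_h^{n+1},\nabla\cdot v_h)=0$ and $(p^{n+1},\nabla\cdot v_h) = (p^{n+1}-q_h,\nabla\cdot v_h)$ for arbitrary $q_h \in Q_h$. The trilinear difference is decomposed via the standard identity
\begin{align*}
b(u^{n+1},u^{n+1},v_h)-b(u_h^n,u_h^{n+1},v_h) = b(e_u^n,u^{n+1},v_h)+b(u_h^n,e_u^{n+1},v_h)+b(u^{n+1}-u^n,u^{n+1},v_h),
\end{align*}
with each piece bounded by Lemma \ref{blemma} and with $\|\nabla u_h^n\|$ kept uniformly in check via the inverse-estimate constant $C_h$ already invoked in Corollary \ref{c1}. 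After Cauchy-Schwarz, interpolation via (\ref{a1})--(\ref{a2}), and taking the supremum over $V_h$, one arrives at a bound $\|(\phi^{n+1}-\phi^n)/\Delta t\|_{V_h^*} \leq \mathcal R^{n+1}$, which by Lemma \ref{glemma} then holds in $\|\cdot\|_{X_h^*}$ at the cost of the factor $C_*^{-1}$. Here $\mathcal R^{n+1}$ collects $\|\nabla e_u^{n+1}\|$, $\|\nabla e_u^n\|$, $\|\tau^{n+1}\|_{-1}$, standard interpolation errors for $u^{n+1}$ and $p^{n+1}$, and the term $\|(\eta^{n+1}-\eta^n)/\Delta t\|_{-1}$ controlled by the $u_t$ regularity of (\ref{error:regularity1}).

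Finally, I would revisit the error equation with $v_h \in X_h$, isolate the pressure term, introduce an interpolant $\pi_h p^{n+1}\in Q_h$, substitute the $X_h^*$ estimate just obtained for the time-derivative term, and apply the discrete inf-sup condition (\ref{infsup}) to get
\begin{align*}
\alpha\|\pi_h p^{n+1}-p_h^{n+1}\| \leq (1+C_*^{-1})\mathcal R^{n+1} + C\|p^{n+1}-\pi_h p^{n+1}\|.
\end{align*}
A triangle inequality bounds $\|e_p^{n+1}\|$ by the same quantities. Multiplying by $\Delta t$, summing over $n$, and applying Cauchy-Schwarz (for the $L^1$ sum) or summing squares (for the $L^2$ sum), Theorem \ref{t3} absorbs all $e_u$-contributions and the $\exp(C_\triangle t^*/2)$ prefactor descends upon taking a square root, while (\ref{a1})--(\ref{a2}) and (\ref{error:regularity1})--(\ref{error:regularity2}) handle the projection and consistency pieces. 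The main obstacle will be the bookkeeping: unlike Corollary \ref{c1}, where the right-hand side involved only the discrete velocity, here each nonlinear and diffusion term splits into a discrete-error piece bounded via Theorem \ref{t3} and a projection piece bounded via the approximation properties, and the $(\eta^{n+1}-\eta^n)/\Delta t$ contribution must be estimated through $u_t$ regularity rather than any discrete identity. Once these contributions are collected, the logical structure is identical to Corollary \ref{c1}.
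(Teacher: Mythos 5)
Your proposal follows essentially the same route as the paper: split $e_u^{n}$ into an interpolation error $\eta^{n}$ plus a discrete part $\phi^{n}_h\in V_h$, bound $\|(\phi^{n+1}_h-\phi^{n}_h)/\Delta t\|_{V_h^\ast}$ from the error equation tested against $V_h$, transfer to $\|\cdot\|_{X_h^\ast}$ via Lemma \ref{glemma}, then return to the error equation over $X_h$ with an intermediate pressure interpolant and invoke the inf-sup condition, triangle inequalities, and Theorem \ref{t3}. The decomposition of the trilinear terms, the handling of the consistency error, and the final summation/Cauchy--Schwarz steps all match the paper's proof, so the argument is correct and not materially different.
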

\begin{proof}
Recall the error equation for LIBE,
\begin{align}\label{erroreq}
	(\frac{e^{n+1}_{u}-e^{n}_{u}}{\Delta t},v_{h}) + \nu(\nabla e^{n+1}_{u},\nabla v_{h}) + b(u^{n+1}-u^{n},u^{n+1},v_{h}) + b(e^{n}_{u},u^{n+1},v_{h})
	\\ + b(u^{n}_{h},e^{n+1}_{u},v_{h}) - (e^{n+1}_{p},\nabla \cdot v_{h}) = (\frac{u^{n+1}-u^{n}}{\Delta t}-u^{n+1}_{t},v_{h}) \; \; \forall v_{h} \in X_{h}.\notag
\end{align}	
Let $v_{h} \in V_{h}$ and rewrite $\Delta t^{-1}(e^{n+1}_{u}-e^{n}_{u},v_{h}) = \Delta t^{-1}(\eta^{n+1}-\eta^{n},v_{h}) - \Delta t^{-1}(\phi^{n+1}_{h}-\phi^{n}_{h},v_{h})$ where $e^{n}_{u} = (u^{n} - I_{h}(u^{n})) - (u^{n}_{h} - I_{h}(u^{n})) = \eta^{n} - \phi^{n}_{h}$.  $I_{h}(u^{n})$ is an interpolant, such as the Lagrange interpolant, of $u$ into the finite element space such that $\phi^{n}_{h} \in V_{h}$.  Moreover, note that $(e^{n+1}_{p},\nabla \cdot v_{h}) = (p^{n+1}-q^{n+1}_{h},\nabla \cdot v_{h})$, since $v_{h} \in V_{h}$, and rearrange.  Then,
\begin{align}\label{keyeq}
(\frac{\phi^{n+1}_{h}-\phi^{n}_{h}}{\Delta t},v_{h}) = (\frac{\eta^{n+1}-\eta^{n}}{\Delta t},v_{h}) + \nu(\nabla e^{n+1}_{u},\nabla v_{h}) + b(u^{n+1}-u^{n},u^{n+1},v_{h}) + b(e^{n}_{u},u^{n+1},v_{h})
\\ + b(u^{n}_{h},e^{n+1}_{u},v_{h}) - (p^{n+1}-q^{n+1}_{h},\nabla \cdot v_{h}) - (\frac{u^{n+1}-u^{n}}{\Delta t}-u^{n+1}_{t},v_{h}) \; \; \forall v_{h} \in V_{h}.\notag
\end{align}	
Using standard estimates on the right-hand side terms yields
\begin{align}\label{errorest1}
(\frac{\eta^{n+1}-\eta^{n}}{\Delta t},v_{h}) &\leq C_{p}\Delta t^{-1/2} \|\eta_{t}\|_{L^{2}(t^{n},t^{n+1};L^{2}(\Omega))} \|\nabla v_{h}\|,
\\ - \nu(\nabla e^{n+1}_{u},\nabla v_{h}) &\leq \nu \|\nabla e^{n+1}_{u}\| \|\nabla v_{h}\|,
\\ - b(u^{n+1}-u^{n},u^{n+1},v_{h}) &\leq C_{1}\Delta t^{1/2}\|\nabla u_{t}\|_{L^{2}(t^{n},t^{n+1};L^{2}(\Omega))} \|\nabla u^{n+1}\|\|\nabla v_{h}\|,
\\ - b(e^{n}_{u},u^{n+1},v_{h}) &\leq C_{1} \|\nabla e^{n}_{u}\| \|\nabla u^{n+1}\| \|\nabla v_{h}\|,
\\ - b(u^{n}_{h},e^{n+1}_{u},v_{h}) &\leq C_{1} \|\nabla u^{n+1}_{h} \| \|\nabla e^{n+1}_{u}\| \|\nabla v_{h}\|,
\\ (p^{n+1}-q^{n+1}_{h},\nabla \cdot v_{h}) &\leq \sqrt{d}\|p^{n+1}-q^{n+1}_{h}\| \|\nabla v_{h}\|,
\\ -(\frac{u^{n+1}-u^{n}}{\Delta t}-u^{n+1}_{t},v_{h}) &\leq C\Delta t^{1/2} \|u_{tt}\|_{L^{2}(t^{n},t^{n+1};L^{2}(\Omega))}\|\nabla v_{h}\|.\label{errorestf}
\end{align}
Using the above estimates in equation (\ref{keyeq}), dividing both sides by $\|\nabla v_{h}\|$, taking a supremum over $V_{h}$ and using Lemma \ref{glemma} yields 
\begin{multline}\label{keyest}
\|\frac{\phi^{n+1}_{h}-\phi^{n}_{h}}{\Delta t}\|_{X_{h}^{\ast}} \leq C^{-1}_{\ast}\Big[C_{p}\Delta t^{-1/2} \|\eta_{t}\|_{L^{2}(t^{n},t^{n+1};L^{2}(\Omega))} + \big( \nu + C_{1}\|\nabla u^{n}_{h}\|\big)\|\nabla e^{n+1}_{u}\|
\\ + C_{1}\|\nabla u^{n+1}\|\big( \|\nabla e^{n}_{u} + \Delta t^{1/2} \|\nabla u_{t}\|_{L^{2}(t^{n},t^{n+1};L^{2}(\Omega))}\|\big)
\\ + \sqrt{d}\|p^{n+1}-q^{n+1}_{h}\| + C\Delta t^{1/2} \|u_{tt}\|_{L^{2}(t^{n},t^{n+1};L^{2}(\Omega))}\Big].
\end{multline}
Reconsidering the error equation (\ref{erroreq}), splitting the pressure error term via $(e^{n+1}_{p},\nabla \cdot v_{h}) = (p^{n+1}-q^{n+1}_{h},\nabla \cdot v_{h}) - (p^{n+1}_{h} - q^{n+1}_{h},\nabla \cdot v_{h})$, and rearranging yields
\begin{multline}
(q^{n+1}_{h} - p^{n+1}_{h},\nabla \cdot v_{h}) = (\frac{\eta^{n+1}-\eta^{n}}{\Delta t},v_{h}) - (\frac{\phi^{n+1}_{h}-\phi^{n}_{h}}{\Delta t},v_{h}) + \nu(\nabla e^{n+1}_{u},\nabla v_{h}) \\ + b(u^{n+1}-u^{n},u^{n+1},v_{h}) + b(e^{n}_{u},u^{n+1},v_{h}) + b(u^{n}_{h},e^{n+1}_{u},v_{h}) - (p^{n+1}-q^{n+1}_{h},\nabla \cdot v_{h}) 
\\ - (\frac{u^{n+1}-u^{n}}{\Delta t}-u^{n+1}_{t},v_{h}) \; \; \forall v_{h} \in X_{h}.
\end{multline}	
Using the above estimates (\ref{errorest1}) - (\ref{errorestf}), dividing by $\|\nabla v_{h}\|$, taking a supremum over $v_{h} \in X_{h}$, using (\ref{keyest}), and the discrete inf-sup condition yields
\begin{align}\label{keypressureeq}
\alpha\|q^{n+1}_{h} - p^{n+1}_{h}\| \leq (1+C^{-1}_{\ast})\Big[C_{p}\Delta t^{-1/2} \|\eta_{t}\|_{L^{2}(t^{n},t^{n+1};L^{2}(\Omega))} + \big( \nu + C_{1}\|\nabla u^{n}_{h}\|\big)\|\nabla e^{n+1}_{u}\|
\\ + C_{1}\|\nabla u^{n+1}\|\big( \|\nabla e^{n}_{u}\| + \Delta t^{1/2} \|\nabla u_{t}\|_{L^{2}(t^{n},t^{n+1};L^{2}(\Omega))}\big) + \sqrt{d}\|p^{n+1}-q^{n+1}_{h}\| \notag
\\ + C\Delta t^{1/2} \|u_{tt}\|_{L^{2}(t^{n},t^{n+1};L^{2}(\Omega))}\Big]. \notag
\end{align}	
Multiplying by $\Delta t$, summing from $n=0$ to $n=N-1$, and using the Cauchy-Schwarz inequality yields
\begin{multline}
\alpha\Delta t \sum_{n=0}^{N-1}\|q^{n+1}_{h} - p^{n+1}_{h}\| \leq (1+C^{-1}_{\ast})\Big[C_{p}\Delta t^{1/2} \|\eta_{t}\|_{L^{2}(0,t^{\ast};L^{2}(\Omega))}
\\ + \big( \nu\sqrt{t^{\ast}} + C_{1}\vertiii{\nabla u_{h}}_{2,0}\big)\vertiii{\nabla e_{u}}_{2,0} + C_{1}\vertiii{\nabla u}_{2,0}\|\big( \vertiii{\nabla e_{u}}_{2,0} + \Delta t \|\nabla u_{t}\|_{L^{2}(0,t^{\ast};L^{2}(\Omega))}\big)
\\ + \sqrt{dt^{\ast}}\vertiii{p^{n+1}-q^{n+1}_{h}}_{2,0} + C\Delta t^{3/2} \|u_{tt}\|_{L^{2}(0,t^{\ast};L^{2}(\Omega))}\Big].
\end{multline}	
By the triangle inequality and $\vert \cdot \vert_{l_{1}} \leq \sqrt{N} \vert \cdot \vert_{l_{2}}$, for an N-vector,
\begin{align*}
\alpha\Delta t \sum_{n=0}^{N-1}\|e^{n+1}_{p}\| &\leq \alpha\Delta t \sum_{n=0}^{N-1}\|p^{n+1} - q^{n+1}_{h}\| + \alpha\Delta t \sum_{n=0}^{N-1}\|q^{n+1}_{h} - p^{n+1}_{h}\| 
\\ &\leq \alpha\sqrt{t^{\ast}} \vertiii{p^{n+1} - q^{n+1}_{h}}_{2,0} + \alpha\Delta t \sum_{n=0}^{N-1}\|q^{n+1}_{h} - p^{n+1}_{h}\|.
\end{align*}
Consequently,
\begin{multline}\label{add1}
\alpha\Delta t \sum_{n=0}^{N-1}\|e^{n+1}_{p}\| \leq (1+C^{-1}_{\ast})\Big[C_{p}\Delta t^{1/2} \|\eta_{t}\|_{L^{2}(0,t^{\ast};L^{2}(\Omega))} + \big( \nu\sqrt{t^{\ast}} + C_{1}\vertiii{\nabla u_{h}}_{2,0}\big)\vertiii{\nabla e_{u}}_{2,0}
\\ + C_{1}\vertiii{\nabla u}_{2,0}\|\big( \vertiii{\nabla e_{u}}_{2,0} + \Delta t \|\nabla u_{t}\|_{L^{2}(0,t^{\ast};L^{2}(\Omega))}\big) + (\alpha + \sqrt{d})\sqrt{t^{\ast}}\vertiii{p^{n+1}-q^{n+1}_{h}}_{2,0}
\\ + C\Delta t^{3/2} \|u_{tt}\|_{L^{2}(0,t^{\ast};L^{2}(\Omega))}\Big].
\end{multline}
Now, consider (\ref{keypressureeq}), square both sides, and use $\vert \cdot \vert^{2}_{l_{1}} \leq 7 \vert \cdot \vert^{2}_{l_{2}}$, for a 7-vector.  Multiplying by $\Delta t$ and summing over n from $n=0$ to $n=N-1$ yields
\begin{multline*}
\alpha^{2} \vertiii{p_{h} - q_{h}}^{2}_{2,0} \leq 7(1+C^{-1}_{\ast})^{2}\Big[C^{2}_{p}\|\eta_{t}\|^{2}_{L^{2}(0,t^{\ast};L^{2}(\Omega))} + \big(\nu^{2} + C^{2}_{1}C^{2}_{h}\big)\vertiii{\nabla e_{u}}^{2}_{2,0}
\\ + C^{2}_{1}\vertiii{\nabla u}^{2}_{\infty,0}\big( \vertiii{\nabla e_{u}}_{2,0}^{2}+ \Delta t^{2} \|\nabla u_{t}\|^{2}_{L^{2}(0,t^{\ast};L^{2}(\Omega))}\big) + d\vertiii{p-q_{h}}_{2,0}^{2} + C^{2}\Delta t^{2} \|u_{tt}\|^{2}_{L^{2}(0,t^{\ast};L^{2}(\Omega))}\Big].
\end{multline*}	
The triangle inequality gives
\begin{align}\label{add2}
\alpha^{2} \vertiii{e_{p}}^{2}_{2,0} &\leq \alpha^{2} \vertiii{p - q_{h}}^{2}_{2,0} + \alpha^{2} \vertiii{p_{h} - q_{h}}^{2}_{2,0} 
\\ &\leq 7(1+C^{-1}_{\ast})^{2}\Big[C^{2}_{p}\|\eta_{t}\|^{2}_{L^{2}(0,t^{\ast};L^{2}(\Omega))} + \big(\nu^{2} + C^{2}_{1}C^{2}_{h}\big)\vertiii{\nabla e_{u}}^{2}_{2,0} \notag
\\ &+ C^{2}_{1}\vertiii{\nabla u}^{2}_{\infty,0}\big( \vertiii{\nabla e_{u}}_{2,0}^{2}+ \Delta t^{2} \|\nabla u_{t}\|^{2}_{L^{2}(0,t^{\ast};L^{2}(\Omega))}\big) + (d + \alpha^{2})\vertiii{p-q_{h}}_{2,0}^{2} \notag
\\ &+ C^{2}\Delta t^{2} \|u_{tt}\|^{2}_{L^{2}(0,t^{\ast};L^{2}(\Omega))}\Big]. \notag
\end{align}	
Square root (\ref{add2}) and add it to (\ref{add1}).  Taking infimums over $V_{h}$ and $Q_{h}$ and applying Lemma \ref{vlemma}, the approximation properties (\ref{a1}) - (\ref{a2}), and Theorem \ref{t3} yield the result.
\end{proof}	
\section{Conclusion}\label{sectconc}
We presented a simple, general technique to prove stability and convergence of FEM pressure approximations to the NSE.  The technique required that: the discrete inf-sup condition holds and the equivalence of certain dual norms.  Such requirements are satisfied by the popular MINI-element and Taylor-Hood family of elements.  The technique was illustrated on linearly implicit Backward Euler.  Consequently, the method is seen to be applicable to many other numerical schemes.
\section*{Acknowledgements}
The author would like to thank Professor Layton for his encouragement and suggestions.
\section*{Appendix}
For completeness, we provide a proof of the $H^{1}(\Omega)^{d}$ stability of the $L^{2}(\Omega)^{d}$ orthogonal projection onto $V_{h}$ for quasi-uniform meshes.

\begin{theorem}  
Let $\Omega_{h} \subset \Omega$ be a quasi-uniform mesh and $X_{h} \subset X$ a conforming finite element space consisting of continuous piecewise polynomials of degree $j \geq 1$.  Denote $P:L^{2}(\Omega)^{d} \rightarrow V_{h}$ as the $L^{2}$-orthogonal projection onto $V_{h} \subset X_{h}$ satisfying for all $u \in L^{2}(\Omega)^{d}$:
\begin{align*}
	(Pu - u,v_{h}) = 0 \; \forall v_{h} \in V_{h}.
\end{align*}
Then, for all $u \in H^{1}(\Omega)^{d}$:
\begin{align*}
\|\nabla Pu\| \leq C\|\nabla u\|.
\end{align*}
\end{theorem}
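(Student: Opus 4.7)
The strategy is the classical Heywood--Rannacher / Girault--Raviart program for $H^1$-stability of $L^2$-projections: produce a Fortin-type interpolant $\Pi_h u \in V_h$ that is simultaneously $H^1$-stable and $L^2$-close to $u$ at the optimal rate, then exploit the $L^2$-best-approximation property of $P$ together with the global inverse inequality afforded by quasi-uniformity to turn $L^2$-closeness of $Pu$ and $\Pi_h u$ into $H^1$-closeness.

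Concretely, I would first construct $\Pi_h u \in V_h$ satisfying
\[
\|u - \Pi_h u\| \le C h \|\nabla u\|, \qquad \|\nabla \Pi_h u\| \le C \|\nabla u\|,
\]
by setting $\Pi_h u := I_h u + w_h$ where $I_h u \in X_h$ is a Scott--Zhang interpolant (already enjoying both bounds with $X_h$ in place of $V_h$) and $w_h \in X_h$ is a Fortin correction solving $(q_h, \nabla \cdot w_h) = (q_h, \nabla \cdot (u - I_h u))$ for every $q_h \in Q_h$. The discrete inf-sup condition (\ref{infsup}) furnishes such $w_h$ with $\|\nabla w_h\| \le \alpha^{-1}\|\nabla(u - I_h u)\|$, and an Aubin--Nitsche argument against the dual Stokes problem on $\Omega$ gives in addition $\|w_h\| \le C h \|\nabla(u - I_h u)\|$. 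With $\Pi_h u$ in hand, the main estimate closes quickly: split
\[
\|\nabla Pu\| \le \|\nabla(Pu - \Pi_h u)\| + \|\nabla \Pi_h u\|,
\]
bound the first summand by $\|\nabla(Pu - \Pi_h u)\| \le C h^{-1}\|Pu - \Pi_h u\|$ via the global inverse inequality valid on the quasi-uniform mesh, and use that $Pu$ is the $L^2$-best approximation of $u$ from $V_h$ together with $\Pi_h u \in V_h$ to get
\[
\|Pu - \Pi_h u\| \le \|Pu - u\| + \|u - \Pi_h u\| \le 2\|u - \Pi_h u\| \le 2Ch\|\nabla u\|.
\]
The two factors $h^{-1}$ and $h$ cancel, and combining with $H^1$-stability of $\Pi_h u$ yields the claim.

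The main obstacle is the Aubin--Nitsche step that upgrades the natural $\|\nabla w_h\| \le C\|\nabla u\|$ bound for the Fortin correction to the $h$-refined estimate $\|w_h\| \le C h \|\nabla u\|$; without this gain of one power of $h$, the inverse inequality would leave behind a stray factor of $h^{-1}$ and the argument would fail. The refinement rests on $H^2$-regularity of an auxiliary dual Stokes problem, which is available on the convex polyhedral domain $\Omega$ assumed in the paper, and on $H^1$-stability of the $L^2$-orthogonal projection onto $X_h$ on a quasi-uniform mesh---the classical Bramble--Xu-type result referenced in the remark after Lemma \ref{glemma}. Every remaining ingredient---Scott--Zhang stability and approximation, the inverse inequality, the triangle inequality, and the $L^2$-best-approximation property of $P$---is routine.
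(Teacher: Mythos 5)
Your proposal follows the same skeleton as the paper's proof: bound $\|\nabla Pu\|$ by $\|\nabla(Pu-\Pi_h u)\|+\|\nabla \Pi_h u\|$ for a stable interpolant, absorb the first term with the global inverse inequality on the quasi-uniform mesh, and pay for the factor $h^{-1}$ with an $O(h)$ bound on $\|Pu-\Pi_h u\|$ in $L^2$. The paper does this with the Scott--Zhang interpolant $I_{SZ}u\in X_h$ and dispatches the key bound $\|Pu-u\|\le Ch\|\nabla u\|$ with the phrase ``interpolation estimates''; you have correctly identified that this is where the real work lies and have tried to supply it by constructing a Fortin-corrected interpolant landing in $V_h$.

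The repair does not close, for two reasons. First, your $\Pi_h u=I_h u+w_h$ satisfies $(q_h,\nabla\cdot \Pi_h u)=(q_h,\nabla\cdot u)$ for all $q_h\in Q_h$, which vanishes only when $u$ is (at least discretely) divergence-free; the theorem is stated for every $u\in H^1(\Omega)^d$, and the application inside Lemma \ref{glemma} requires $P$ acting on arbitrary $v_h\in X_h$, which are not divergence-free. So $\Pi_h u\notin V_h$ in general and the best-approximation step $\|Pu-u\|\le\|u-\Pi_h u\|$ is unavailable. Second, and more fundamentally, the target estimate $\inf_{v_h\in V_h}\|u-v_h\|\le Ch\|\nabla u\|$ is simply false for general $u$, so no Aubin--Nitsche refinement of the Fortin correction can produce it. Take $u=\nabla\phi$ with $\phi$ smooth, compactly supported, and mean-zero: for any $v_h\in V_h$, integration by parts and discrete divergence-freeness give $(u,v_h)=-(\phi-q_h,\nabla\cdot v_h)\le Ch\,\|\nabla\phi\|\,\|\nabla v_h\|$, hence $\|Pu\|^2=(u,Pu)\le Ch\,\|u\|\,\|\nabla Pu\|$. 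If the theorem's conclusion holds for this $u$, then $\|Pu\|\to 0$ as $h\to 0$ and therefore $\|u-Pu\|^2=\|u\|^2-\|Pu\|^2\to\|u\|^2\neq 0$, which is not $O(h^2\|\nabla u\|^2)$; if the conclusion fails, the proof is wrong anyway. Either way the chain $\|Pu-\Pi_h u\|\le 2\|u-\Pi_h u\|\le Ch\|\nabla u\|$ breaks, and the stray $h^{-1}$ from the inverse inequality survives. (The same objection applies verbatim to the corresponding line of the paper's own proof.) A correct treatment must handle the gradient part of $u$ differently --- e.g.\ by showing that $\|P(\nabla\phi)\|$ itself is $O(h\,|\phi|_{2})$, so that the inverse inequality bounds $\|\nabla P(\nabla\phi)\|$ directly, rather than by showing $P(\nabla\phi)$ is $L^2$-close to $\nabla\phi$ --- with your Fortin argument reserved for the divergence-free part; compare the careful treatments in \cite{Bank,Bramble} for the unconstrained projection.
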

\begin{proof}	
Fix $u \in H^{1}(\Omega)^{d}$ and let $I_{SZ}:H^{1}(\Omega)^{d} \rightarrow X_{h}$ denote the Scott-Zhang interpolant \cite{Scott}.  Consider $\|\nabla Pu\|$, add and subtract $I_{SZ}u$, and apply the triangle inequality.  Then, 
\begin{align*}
\|\nabla Pu\| \leq \|\nabla (Pu - I_{SZ}u)\| + \|\nabla I_{SZ}u\| .
\end{align*}
Since $Pu - I_{SZ}u \in X_{h}$, the inverse estimate $\Big(\|\nabla v_{h}\| \leq C_{inv}h^{-1} \|v_{h}\|\Big)$ holds.  Consequently, applying the inverse estimate to the first term and $H^{1}(\Omega)^{d}$ stability of the Scott-Zhang interpolant \cite{Ern} to the second yields
\begin{align*}
\|\nabla (Pu - I_{SZ}u)\| + \|\nabla I_{SZ}u\| \leq C_{inv}h^{-1}\|Pu - I_{SZ}u\| + C_{SZ}\|\nabla u\|.
\end{align*}
The triangle inequality and interpolation estimates give
\begin{align*}
C_{inv}h^{-1}\|Pu - I_{SZ}u\| \leq C_{inv}h^{-1}\Big(\|Pu - u\| + \|u - I_{SZ}u\|\Big) \leq C h^{-1} h\|\nabla u\| = C\|\nabla u\|.
\end{align*}
Collecting constants yields the result.
\end{proof}


\begin{thebibliography}{00}
	\bibitem{Bank} {R. E. Bank and H. Yserentant, \textit{On the $H^{1}$-stability of the $L_{2}$-projection onto finite element spaces}, Numer. Math., 126 (2014), pp. 361-381.}
	\bibitem{Bramble} {J. H. Bramble, J. E. Pasciak, and O. Steinbach, \textit{On the stability of the $L^2$ projection in $H^1(\Omega)$}, Mathematics of Computation, 71 (2001), pp. 147-156.}
	\bibitem{Ern} {A. Ern and J.-L. Guermond, \textit{Theory and Practice of Finite Elements}, Springer-Verlag, New York, 2004.}
	\bibitem{Fiordilino2} {J. A. Fiordilino and M. McLaughlin, \textit{An Artificial Compressibility Ensemble Timestepping Algorithm for Flow Problems}, arXiv preprint arXiv:1712.06271 (2017).}
    \bibitem{Galvin} {K. J. Galvin, \textit{New subgrid artificial viscosity Galerkin methods for the Navier-Stokes equations}, Comput. Methods Appl. Mech. Engrg., 200 (2011), pp. 242-250.}
	\bibitem{Girault} {V. Girault and P. A. Raviart, \textit{Finite Element Approximation of the Navier-Stokes Equations}, Springer, Berlin, 1979.}
	\bibitem{Heywood} {J. G. Heywood and R. Rannacher, \textit{Finite-Element Approximation of the Nonstationary Navier-Stokes Problem Part IV: Error Analysis for Second-Order Time Discretization}, SIAM J. Numer. Anal., 27 (1990), pp. 353-384.}
	\bibitem{Ingram} {R. Ingram, \textit{Approximating fast, viscous fluid flow in complicated domains}, Ph.D. thesis, University of Pittsburgh, Pittsburgh, PA, 2011.}
	\bibitem{Labovschii} {A. Labovschii, \textit{A Defect Correction Method for the Time-Dependent Navier-Stokes Equations}, Technical report, TR-MATH 07-14, University of Pittsburgh, 2007.}
    \bibitem{Layton} {W. Layton, \textit{Introduction to the Numerical Analysis of Incompressible, Viscous Flows}, SIAM, Philadelphia, 2008.}
    \bibitem{Scott} {L. R. Scott and S. Zhang, \textit{Finite element interpolation of nonsmooth functions satisfying boundary conditions}, Mathematics of Computation, 54 (1990), pp. 483-493.}
    \bibitem{Shen} {J. Shen, \textit{On a new pseudocompressibility method for the incompressible Navier-Stokes equations}, Applied Numerical Mathematics, 21 (1996), pp. 71-90.}
	\bibitem{Temam} {R. Temam, \textit{Navier-Stokes Equations and Nonlinear Functional Analysis}, SIAM, Philadelphia, 1995.}
	\bibitem{Zhang} {Y. Zhang, Y. Hou, and J. Zhao, \textit{Error analysis of a fully discrete finite element variational multiscale method for the natural convection problem}, Computers \& Mathematics with Applications, 68 (2014), pp. 543-567.}
	
\end{thebibliography}
\end{document}